\documentclass[11pt,oneside,reqno]{amsart}

\usepackage{amsmath,amssymb,amsthm}
\usepackage[colorlinks=true,linkcolor=blue,citecolor=blue,urlcolor=blue]{hyperref}

\theoremstyle{plain}
\newtheorem{theorem}{Theorem}
\newtheorem{lemma}{Lemma}
\newtheorem{conjecture}{Conjecture}

\newtheorem{corollary}{Corollary}

\theoremstyle{remark}

\newcommand{\sone}{\sigma_1}
\newcommand{\coeff}[2]{[#1]#2}

\allowdisplaybreaks

\title[A conjecture of Amdeberhan-Andrews-Ballantine]{Two Proofs of a Conjecture of Amdeberhan, Andrews and Ballantine for double Lambert series and a new Representation for
$E_2(q)$}

\author{Su-Ping Cui}
\address{School of Mathematical Sciences, Qufu Normal University, Qufu 273165, P.R. China}
\email{jiayoucui@163.com}

\author{Rahul Kumar}
\address{Department of Mathematics, Indian Institute of Technology, Roorkee-247667, Uttarakhand, India}
\email{rahul.kumar@ma.iitr.ac.in}
\author{Aman Singh}
\address{Department of Mathematics, Indian Institute of Technology, Roorkee-247667, Uttarakhand, India}
\email{amansingh9839269454@gmail.com}
\subjclass[2020]{11P81, 11B65, 05A15}
  \keywords{Lambert series, Double Lambert series, $q$-series}

\begin{document}

\begin{abstract}
In this note, we prove a recent conjecture of Amdeberhan, Andrews and Ballantine concerning a double Lambert series  (\textit{J. Combin. Theory Series A} \textbf{221} (2026), Paper No. 106154). More precisely, they conjectured that
\[
\coeff{q^{N2^a}}
\sum_{m,k\geq 1}
\frac{q^{mk2^a}}{(1+q^{k2^{a-1}})(1-q^{2m-1})}
=\sigma_1(N),
\]
where $\sigma_1(N)$ is the sum of all the positive divisors of $N$. We provide two proofs of this conjecture. One of the approach leads us to derive a new representation of quasi-modular forms $E_2(q)$.
\end{abstract}

\maketitle

\section{Introduction}
A series of the form
\begin{align}\label{lambert}
\sum_{n\geq1}\frac{a_nq^n}{1-q^n} \quad (|q|<1)
\end{align}
is known as \emph{Lambert series}, named after J. H. Lambert \cite{lambert}, who showed in 1771 that the generating function for the divisor function
is given by
\begin{align*}
\sum_{n\geq1}\frac{q^n}{1-q^n}.
\end{align*}
Such series play an important role in number theory and more broadly throughout mathematics. For example, when $a_n=n^{2k-1}$, the resulting Lambert series are essentially modular forms for $k>1$ and quasi-modular forms for $k=1$.

Recently, Amdeberhan, Andrews and Ballantine \cite{AAB} undertook a systematic study of generalized Lambert series and double Lambert series of the types
\begin{align*}
\sum_{n\geq1}R_n\left(q^n,q\right)\qquad \mathrm{and}\qquad \sum_{m,n\geq1}S_{n,m}\left(q^n,q^m,q\right),
\end{align*}
where $R_n\left(x,z\right)$ and $S_{n,m}\left(x,y,z\right)$ are rational functions of $x,y$ and $z$. They established several connections between these series and Rogers--Ramanujan type $q$-series. In the course of their investigation, they proposed the following two conjectures \cite[p.~21, Conjecture 5.12 \& Conjecture 5.13]{AAB}:

\begin{conjecture}[\cite{AAB}]\label{aab conjecture 1}
Let $a$ be a positive integer. Then, for each positive integer $N$, we have
\begin{equation*}
\coeff{q^{N2^a}}
\sum_{m,k\geq 1}
\frac{q^{mk2^a}}{(1+q^{k2^{a-1}})(1-q^{2m-1})}
=\sone(N),
\end{equation*}
where $\displaystyle\sigma_1(N):=\sum_{0<d|N}d$. Here and throughout the sequel, $[q^N]A(q)$ denotes the coefficient of $q^N$ in the series $A(q)$.
\end{conjecture}

\begin{conjecture}[\cite{AAB}]\label{aab conjecture 2}
If $r$ is a positive integer, then
\begin{align*}
\coeff{q^{2r}}\sum_{m,n\geq1}\frac{q^{2mn}}{(1+q^{2n-1})(1-q^{2m-1})}=\coeff{q^{2r}}\sum_{n\geq1}\frac{(n-1)q^n}{1+q^{2n-1}}.
\end{align*}
\end{conjecture}

Interestingly, Amdeberhan et al. \cite{AAB} proposed these conjectures in the quest of proving the following conjecture of Andrews, Dixit, Schultz and Yee \cite[p.~24, Problem 2]{adsy}:
\begin{conjecture}[\cite{adsy}]\label{adsy conjecture}
The function
\begin{align*}
Y(q):=\sum_{m,n\geq1}\frac{(-q)^{2mn+m}}{(1+q^n)(1-q^{2m-1})}
\end{align*}
 is an odd function of $q$.
\end{conjecture}
Andrews et al. \cite{adsy} were led to this conjecture in their study of combinatorial interpretations of mock theta functions.

Very recently, Conjecture \ref{aab conjecture 2} and Conjecture \ref{adsy conjecture} have been proved by the first author and Tang \cite{ct}. In particular, Conjecture \ref{adsy conjecture} follows immediately from their elegant identity for $Y(q)$ \cite[Theorem 1.2]{ct}:
\begin{align}\label{ct}
Y(q)=-q\frac{(q^4;q^4)_\infty^4}{(q^2;q^2)_\infty^2}\sum_{k\geq1}\frac{q^{2k}}{1+q^{2k}},
\end{align}
where $(a;q)_\infty$ is the standard $q$-Pochhammer symbol:
\begin{align*}
(a;q)_n:&=\prod_{j=0}^{n-1} (1-aq^j),\\
(a;q)_\infty:&=\prod_{j=0}^{\infty} (1-aq^j).
\end{align*}

We here note that Fang \cite{fang} also independently  proved Conjecture \ref{adsy conjecture}.

In this note, we settle the remaining Conjecture \ref{aab conjecture 1}. That is, we prove the following result:
\begin{theorem}\label{thm:main}
Let $a$ and $N$ be positive integers.  Then
\begin{equation}\label{eq:main}
\coeff{q^{N2^a}}
\sum_{m,k\geq 1}
\frac{q^{mk2^a}}{(1+q^{k2^{a-1}})(1-q^{2m-1})}
=\sone(N).
\end{equation}
\end{theorem}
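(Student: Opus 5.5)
The plan is to first remove the parameter $a$, and then prove the case $a=1$ by a direct series computation. Put $M=2^{a-1}$, so that the summand is $q^{2Mmk}/\bigl((1+q^{kM})(1-q^{2m-1})\bigr)$. Expanding both denominators geometrically gives
\begin{equation*}
\sum_{m,k\geq1}\frac{q^{2Mmk}}{(1+q^{kM})(1-q^{2m-1})}=\sum_{m,k\geq1}\sum_{i,j\geq0}(-1)^i q^{M(2mk+ki)+(2m-1)j}.
\end{equation*}
To contribute to $q^{2MN}$ we need $(2m-1)j\equiv 0 \pmod M$. Since $2m-1$ is odd and $M$ is a power of $2$, this forces $j=Mt$ with $t\geq0$. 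The exponent then becomes $M\bigl(2mk+ki+(2m-1)t\bigr)$, and the coefficient of $q^{2MN}$ is the signed count of solutions of $2mk+ki+(2m-1)t=2N$. This count does not depend on $a$. Hence \eqref{eq:main} is equivalent to the case $a=1$:
\begin{equation*}
\coeff{q^{2N}}F(q)=\sone(N),\qquad F(q):=\sum_{m,k\geq1}\frac{q^{2mk}}{(1+q^{k})(1-q^{2m-1})},
\end{equation*}
that is, to the assertion that the even part $\tfrac12\bigl(F(q)+F(-q)\bigr)$ equals $\sum_{N\ge1}\sone(N)q^{2N}$.

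For the case $a=1$ I would first sum over $m$ with $j$ fixed, writing $1/(1-q^{2m-1})=\sum_{j\ge0}q^{(2m-1)j}$. This gives
\begin{equation*}
F(q)=\sum_{k\ge1}\sum_{j\ge0}\frac{q^{2(k+j)-j}}{(1+q^k)(1-q^{2(k+j)})}.
\end{equation*}
Next I would split $\frac{1}{1+q^k}=\frac{1}{1-q^k}-\frac{2q^k}{1-q^{2k}}$. Each resulting piece would then be reorganised by the substitution $n=k+j$, so that it becomes a Lambert-type series in $q^{2n}$ whose coefficients are finite sums over $1\le k\le n$. These inner finite sums are geometric in $k$ once the factor $1/(1\pm q^k)$ is expanded, so they can be evaluated in closed form.

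The goal of this step is to express $F(q)+F(-q)$ as a combination of $\sum_{n}\frac{nq^{2n}}{1-q^{2n}}=\sum_N\sone(N)q^{2N}$ and of series that are manifestly odd in $q$. The odd series then drop out of the even part, leaving the desired Lambert series for $\sone$.

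The main obstacle is this last step: seeing the cancellation that isolates $\sum n q^{2n}/(1-q^{2n})$. If the analytic route becomes messy, I would switch to a combinatorial argument. The coefficient $\coeff{q^{2N}}F(q)$ is the signed count $\sum(-1)^i$ over quadruples $(m,k,i,t)$ with $m,k\ge1$, $i,t\ge0$, and $2mk+ki+(2m-1)t=2N$; note that this forces $t\equiv ki\pmod 2$. I would then look for a sign-reversing involution, for instance trading $i\mapsto i\pm1$ against a compensating change in $(m,t)$. The aim is for its fixed points to be the pairs $(d,N/d)$ weighted by $d$. Such fixed points could come from the configurations with $t=0$ and $i$ even, since these give $k(2m+i)=2N$.
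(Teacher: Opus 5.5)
Your reduction from general $a$ to $a=1$ is correct and matches the paper's second proof: only $j\equiv 0 \pmod{2^{a-1}}$ survives, and the resulting signed count does not depend on $a$. Your substitution $n=k+j$ also reaches the paper's intermediate form $F(q)=\sum_{n\ge1}\frac{q^n}{1-q^{2n}}\sum_{k=1}^{n}\frac{q^k}{1+q^k}$. But the case $a=1$ is the whole content of the theorem, and there you have only a plan whose decisive step you yourself flag as unresolved. The route you describe does not supply that step. Expanding $1/(1\pm q^k)$ makes the $k$-sum geometric, but it leaves an infinite sum over the expansion index, e.g.\ $\sum_{k=1}^{n}\frac{q^k}{1+q^k}=\sum_{i\ge0}(-1)^i\frac{q^{i+1}(1-q^{n(i+1)})}{1-q^{i+1}}$. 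So you only regenerate a double Lambert series, with no visible cancellation. (Also, $F(q)+F(-q)$ is even, so it cannot contain ``manifestly odd'' pieces; you presumably mean $F(q)$ itself.) The combinatorial fallback has the same gap. Your fixed points are the right ones: $t=0$ with $i$ even gives $k(m+i/2)=N$, contributing exactly $\sone(N)$. However, the claim that all remaining quadruples cancel is equivalent to the theorem itself, and no involution is given.

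The missing idea is a partial-fraction telescoping. Put $D_j=\frac{q^j}{1-q^{2j}}$ and $E_j=\frac{q^{2j}}{1-q^{2j}}$. Your split $\frac{q^k}{1+q^k}=\frac{q^k}{1-q^k}-\frac{2q^{2k}}{1-q^{2k}}$ is exactly $D_k-E_k$, so
\[
F(q)=\sum_{h\ge1}D_h^2+\sum_{h\ge1}D_h\Bigl(\sum_{k=1}^{h-1}D_k-\sum_{k=1}^{h}E_k\Bigr).
\]
The diagonal part is already $\sum_{h}\frac{q^{2h}}{(1-q^{2h})^2}=\sum_N\sone(N)q^{2N}$. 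The rest vanishes identically because $D_rD_{r+s}=D_s(E_r-E_{r+s})$ for $r,s\ge1$. Summing over $r,s\ge1$ telescopes to $\sum_{h}\sum_{k<h}D_kD_h=\sum_{h}D_h\sum_{k\le h}E_k$. In particular $F(q)$ is itself even, so there is no need to pass to the even part. Without this identity, or the paper's alternative first proof via Lemmas~\ref{lem-1} and~\ref{lem-2} (which handles $a\ge2$ by a separate congruence argument), your proposal establishes only that the theorem for all $a$ follows from the case $a=1$.
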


The main ingredient in one of our proofs of the above theorem is the following transformation which converts the double Lambert series into a single Lambert series.
\begin{theorem}\label{lem:base}
For $|q|<1$,  we have,
\begin{equation}\label{eq:base}
\sum_{m,k\geq 1}
\frac{q^{2mk}}{(1+q^k)(1-q^{2m-1})}
=
\sum_{n\geq 1}\frac{q^{2n}}{(1-q^{2n})^2}=\sum_{N\geq 1}\sone(N)q^{2N}.
\end{equation}
\end{theorem}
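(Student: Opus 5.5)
The plan is to reduce the double series to a single sum over one index with a finite inner sum. I would then prove the identity by comparing coefficients, using a sign-reversing involution, or failing that an elementary Lambert-series manipulation. The second equality in \eqref{eq:base} is routine: expand $q^{2n}/(1-q^{2n})^2=\sum_{t\ge1} t q^{2nt}$ and collect terms to get $\sum_N \sone(N) q^{2N}$. So the content is the first equality.

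\textbf{Step 1 (sum over $m$ first).} Expand $1/(1-q^{2m-1})=\sum_{j\ge0}q^{(2m-1)j}$. The sum over $m$ is then geometric:
\[
\sum_{m\ge1} q^{2m(k+j)-j}=\frac{q^{2k+j}}{1-q^{2(k+j)}} .
\]
Put $n=k+j$, so that $1\le k\le n$. The left side of \eqref{eq:base} becomes
\[
\sum_{n\ge1}\frac{1}{1-q^{2n}}\sum_{k=1}^{n}\frac{q^{n+k}}{1+q^{k}} .
\]
All rearrangements are justified by absolute convergence for $|q|<1$.

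\textbf{Step 2 (coefficient extraction).} Expand $1/(1+q^k)=\sum_{r\ge0}(-1)^r q^{kr}$ and $1/(1-q^{2n})=\sum_{s\ge0}q^{2ns}$. Then the coefficient of $q^M$ on the left equals
\[
\sum_{\substack{n(2s+1)+kt=M\\ 1\le k\le n,\ s\ge0,\ t\ge1}}(-1)^{t-1}.
\]
The target is to show this signed count equals $\sone(M/2)$ when $M$ is even and $0$ when $M$ is odd. Equivalently, writing $M=n(2s+1)+kt$, the sum should reduce to the number of pairs $(d,e)$ with $2de=M$, weighted so that we get $\sum_{d\mid M/2}d$.

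I would split the sum by the parity of $t$ and look for an involution that swaps the roles of the odd multiplier $2s+1$ of $n$ with $t$ (when $t$ is odd), or of $n$ with $k$. The constraint $k\le n$ is exactly what should make such a swap well defined, and it reverses sign except on a set of fixed points. Those fixed points should be the configurations with $k=n$. When $k=n$ the term is $q^{2n}/((1+q^n)(1-q^{2n}))$, and summing these gives a Lambert series that is easy to compare with $\sum_n q^{2n}/(1-q^{2n})^2$.

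\textbf{Fallback (analytic route).} If the involution is awkward, I would rewrite $q^k/(1+q^k)=q^k/(1-q^{k})-2q^{2k}/(1-q^{2k})$. This splits the inner finite sum into two pieces of the shape $\sum_{k\le n}q^{n}\,q^{k}/(1-q^{k})$. I would then interchange the $n$ and $k$ summations, turning the restriction $k\le n$ into a tail sum over $n\ge k$. Finally I would use the symmetric identity
\[
\sum_{n\ge1}\frac{q^{n}}{1-q^{2n}}\,x^n=\sum_{s\ge0}\frac{xq^{2s+1}}{1-xq^{2s+1}}
\]
to bring everything back to ordinary Lambert series. After that, the even and odd power parts should separate, giving $0$ on odd exponents and $\sum \sone(N)q^{2N}$ on even ones.

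\textbf{Main obstacle.} The hard part is Step 2: handling the finite inner sum $\sum_{k=1}^n q^{n+k}/(1+q^k)$, which does not simplify termwise. Either the right involution or the right interchange of the truncated sums is where the real work lies. I would first test the signed count numerically for small $M$ to pin down the fixed-point set before committing to a specific involution.
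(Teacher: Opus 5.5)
Your Step 1 is exactly the paper's opening move and is correct: the left side equals $\sum_{h\ge1}\sum_{k=1}^{h} D_h\, q^k/(1+q^k)$ with $D_j:=q^j/(1-q^{2j})$. The gap is that nothing after this proves the identity, and the one concrete prediction you make is false. The diagonal terms $k=h$ sum to $\sum_h q^{2h}/((1+q^h)(1-q^{2h}))$, which is \emph{not} $\sum_h q^{2h}/(1-q^{2h})^2$. Its coefficient of $q^3$ is $-1$, coming from your tuple $(n,s,k,t)=(1,0,1,2)$. The true coefficient is $0$, because the off-diagonal tuple $(2,0,1,1)$ contributes $+1$. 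So no sign-reversing involution can cancel all $k<n$ configurations while fixing all $k=n$ ones: the off-diagonal part is genuinely nonzero. The fallback ends with ``the even and odd power parts should separate,'' which just restates the conclusion. Converting tail sums via $\sum_{n\ge k}D_n=\sum_{s\ge0}q^{k(2s+1)}/(1-q^{2s+1})$ only produces new double Lambert series, and you never evaluate them.

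The missing ingredient is an exact telescoping. Set $E_j:=q^{2j}/(1-q^{2j})$. Then $q^k/(1+q^k)=D_k-E_k$; your split is the same thing, since $q^k/(1-q^k)=D_k+E_k$. So the left side is
\[
\sum_{h\ge1}D_h^2+\sum_{h\ge1}\Bigl(\sum_{k<h}D_kD_h-\sum_{k\le h}E_kD_h\Bigr).
\]
The partial-fraction identity $D_rD_{r+s}=D_s(E_r-E_{r+s})$, summed over $r\ge1$, telescopes to $\sum_{r\ge1}D_rD_{r+s}=D_s\sum_{r=1}^{s}E_r$. Summing over $s$ gives $\sum_h\sum_{k<h}D_kD_h=\sum_h D_h\sum_{k\le h}E_k$, so the bracket vanishes and the left side is $\sum_h D_h^2=\sum_h q^{2h}/(1-q^{2h})^2$. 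Equivalently, the off-diagonal part equals $\sum_h D_hE_h$, which cancels the $-D_hE_h$ hidden in each diagonal term $D_h(D_h-E_h)$. That is precisely why your fixed-point guess fails. This is the paper's argument; without this cancellation or an equivalent one, your proposal is a setup rather than a proof.
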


As mentioned before, the case $a_n=n$ of the series \eqref{lambert} is essentially the quasi-modular form $E_2(q)$ given by
\begin{align*}
E_2(q):=1-24\sum_{n\geq1}\sigma_1(n)q^{2n},
\end{align*}
where $q=e^{\pi i\tau},\ \tau\in\mathbb{H}:=\{z\in\mathbb{C}: \mathrm{Im}(z)>0\}.$

An interesting consequence of Theorem \ref{lem:base} is the following representation for $E_2(q)$, which appears to be new.
\begin{corollary}\label{new representation}
For $|q|<1$, the following relation holds\textup{:}
\begin{align*}
E_2(q)=1-24\sum_{m,k\geq 1}
\frac{q^{2mk}}{(1+q^k)(1-q^{2m-1})}.
\end{align*}
\end{corollary}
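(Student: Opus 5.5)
The statement to prove is Corollary \ref{new representation}, and it follows almost at once from Theorem \ref{lem:base}. By definition, $E_2(q)=1-24\sum_{n\geq1}\sone(n)q^{2n}$. The rightmost equality in \eqref{eq:base} identifies $\sum_{N\geq1}\sone(N)q^{2N}$ with the double Lambert series $\sum_{m,k\geq1}\frac{q^{2mk}}{(1+q^k)(1-q^{2m-1})}$ for $|q|<1$. Substituting this into the definition of $E_2(q)$ gives the claimed representation.

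For completeness, I would also note the routine middle equality in \eqref{eq:base}. Expand $\frac{x}{(1-x)^2}=\sum_{d\geq1}dx^d$ with $x=q^{2n}$, which gives
\begin{align*}
\sum_{n\geq1}\frac{q^{2n}}{(1-q^{2n})^2}=\sum_{n,d\geq1}dq^{2nd}=\sum_{N\geq1}\Bigl(\sum_{d\mid N}d\Bigr)q^{2N}=\sum_{N\geq1}\sone(N)q^{2N}.
\end{align*}
Interchanging the order of summation is justified by absolute convergence for $|q|<1$.

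The only point needing any care is convergence. The double series on the right of the corollary converges absolutely and locally uniformly for $|q|<1$. Indeed, $|1+q^k|\geq 1-|q|$ and $|1-q^{2m-1}|\geq1-|q|$, so the terms are bounded by $(1-|q|)^{-2}|q|^{2mk}$, which is summable. Hence the identity holds as an equality of analytic functions on the unit disc, and not merely of formal power series. So there is no real obstacle: all the substantive work lies in Theorem \ref{lem:base}, which we take as given.
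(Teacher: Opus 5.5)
Your proposal is correct and follows the paper: the corollary is presented there simply as an immediate consequence of Theorem \ref{lem:base}, obtained by substituting into the definition $E_2(q)=1-24\sum_{n\geq1}\sone(n)q^{2n}$. Your verification of the divisor-sum expansion and your absolute-convergence bound $(1-|q|)^{-2}|q|^{2mk}$ are correct details that the paper leaves implicit.
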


 \textit{Note Added.} We were informed by Professor Andrews and Professor Ballantine that Caner Nazaroglu also proved Conjecture \ref{aab conjecture 1} independently in an unpublished work.



\section{First Proof of Amdeberhan-Andrews-Ballantine Conjecture}\label{proof1}
We provide some lemmas which play important roles in deriving our first proof of Theorem \ref{thm:main}.
\begin{lemma}\label{lem-1} We have
\begin{align*}
\sum^{\infty}_{m=1}\frac{q^{2m-1}}{1-q^{2m-1}}\sum^{\infty}_{n=m}\frac{q^{2n}}{1-q^{2n}}
=\sum^{\infty}_{m=1}\frac{1}{1-q^{2m-1}}\sum^{\infty}_{n=m+1}\frac{q^{2n-1}}{1-q^{2n-1}}.
\end{align*}
\end{lemma}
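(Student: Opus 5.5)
The plan is to reduce the identity to a comparison of explicit single Lambert series, using one partial-fraction identity. Write
\[
a_m:=\frac{q^{2m-1}}{1-q^{2m-1}},\qquad b_n:=\frac{q^{2n}}{1-q^{2n}} .
\]
The left side is $L=\sum_{1\le m\le n}a_mb_n$. Since $\frac{1}{1-q^{2m-1}}=1+a_m$, the right side is
\[
R=\sum_{m<n}(1+a_m)a_n=\sum_{n\ge1}(n-1)a_n+\sum_{m<n}a_ma_n .
\]
All manipulations below are valid as formal power series in $q$, or absolutely for $|q|<1$.

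\emph{Step 1 (key identity).} For any $x,y$,
\[
\frac{1}{1-x}+\frac{y}{1-y}=\frac{1-xy}{(1-x)(1-y)} .
\]
Take $x=q^{2m-1}$ and $y=q^{2(n-m)+1}$ with $1\le m\le n$. Then $xy=q^{2n}$, and $y$ equals $q^{2m'-1}$ with $m'=n+1-m$. Multiplying the identity by $\frac{xy}{1-xy}=b_n$ gives
\[
b_n\bigl(1+a_m+a_{n+1-m}\bigr)=a_m\,a_{n+1-m}.
\]
Sum this over $m=1,\dots,n$. The terms $a_m$ and $a_{n+1-m}$ contribute the same total, so
\[
nb_n+2\sum_{m=1}^n a_mb_n=\sum_{m+m'=n+1}a_ma_{m'} .
\]
Now sum over $n\ge1$. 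The right-hand side becomes the full square
\[
\Bigl(\sum_m a_m\Bigr)^2=2\sum_{m<n}a_ma_n+\sum_n a_n^2 .
\]
Hence
\[
2L=2\sum_{m<n}a_ma_n+\sum_n a_n^2-\sum_n nb_n .
\]
Comparing with the expression for $R$, the lemma is equivalent to the single-sum identity
\[
\sum_{n\ge1}a_n^2-\sum_{n\ge1}nb_n=2\sum_{n\ge1}(n-1)a_n .
\]

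\emph{Step 2 (coefficient check).} Expand each sum:
\[
a_n^2=\sum_{k\ge1}(k-1)q^{(2n-1)k},\qquad nb_n=\sum_{k\ge1}n\,q^{2nk},\qquad (n-1)a_n=\sum_{k\ge1}\tfrac{(2n-1)-1}{2}\,q^{(2n-1)k}.
\]
Write $N=2^eM$ with $M$ odd, and let $d(M)$ be the number of divisors of $M$. The coefficient of $q^N$ on the left is
\[
\sum_{i\mid M}\Bigl(\frac{N}{i}-1\Bigr)-\frac12\sum_{\substack{j\mid N\\ j\ \text{even}}}j .
\]
The first sum equals $2^e\sone(M)-d(M)$. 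The even divisors of $N$ sum to $(2^{e+1}-2)\sone(M)$, so the second term is $(2^e-1)\sone(M)$. The left coefficient is therefore $\sone(M)-d(M)$. On the right, the coefficient of $q^N$ is $\sum_{i\mid M}(i-1)=\sone(M)-d(M)$. The two agree, which would complete the proof.

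\emph{Main obstacle.} The main obstacle is Step 1: finding the pairing $m\leftrightarrow n+1-m$ that makes the partial fraction produce $b_n$ with exactly the odd exponents. Once that is in place, the rest is routine bookkeeping of divisor sums. The only delicate point there is the power of $2$ in $N$ in Step 2, which I would double-check on a small case such as $N=2$ or $N=4$.
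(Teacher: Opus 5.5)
Your proof is correct, and it follows a genuinely different route from the paper's.

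The paper stays entirely at the level of series rearrangement. It writes the left side as $\bigl(\sum_m a_m\bigr)\bigl(\sum_n b_n\bigr)-\sum_n b_n\sum_{m>n}a_m$. It then turns the tail $\sum_{m>n}a_m$ into $\sum_{m\ge1}q^{2mn+m}/(1-q^{2m})$ by expanding geometric series and swapping the order of summation (equation \eqref{1-2}). The resulting double sum goes through two more expand-and-swap steps until the right-hand side reappears, and the proof closes with the elementary identity $\sum_m a_m=\sum_m q^m/(1-q^{2m})$.

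You instead use the partial-fraction relation $b_n(1+a_m+a_{n+1-m})=a_ma_{n+1-m}$. Its symmetry under $m\leftrightarrow n+1-m$ turns $2L$ into $\bigl(\sum_m a_m\bigr)^2-\sum_n nb_n$. The lemma then collapses to the single-series identity $\sum_n a_n^2-\sum_n nb_n=2\sum_n(n-1)a_n$. Your divisor-sum check of this is right: both sides give $\sigma_1(M)-d(M)$, and the case $e=0$ is handled correctly by $(2^{e+1}-2)\sigma_1(M)$.

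Your argument is shorter and more structural. It is close in spirit to the telescoping identity \eqref{eq:telescoping-block} that the paper uses in its second proof of Theorem~\ref{lem:base}. However, it ends with an arithmetic coefficient comparison. The paper's proof is a purely formal identity of series, and its expand-and-swap transformation is the tool the authors reuse in \eqref{1-5}, Lemma~\ref{lem-2} and \eqref{3-1}.
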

\begin{proof} Let
\begin{align*}
A(q):=\sum^{\infty}_{m=1}\frac{q^{2m-1}}{1-q^{2m-1}}\sum^{\infty}_{n=m}\frac{q^{2n}}{1-q^{2n}}.
\end{align*}
Then
\begin{align}
A(q)&=\sum^{\infty}_{m=1}\frac{q^{2m-1}}{1-q^{2m-1}}\sum^{\infty}_{n=1}\frac{q^{2n}}{1-q^{2n}}
-\sum^{\infty}_{m=1}\frac{q^{2m-1}}{1-q^{2m-1}}\sum^{m-1}_{n=1}\frac{q^{2n}}{1-q^{2n}}\nonumber\\
&=\sum^{\infty}_{m=1}\frac{q^{2m-1}}{1-q^{2m-1}}\sum^{\infty}_{n=1}\frac{q^{2n}}{1-q^{2n}}
-\sum^{\infty}_{n=1}\frac{q^{2n}}{1-q^{2n}}\sum^{\infty}_{m=n+1}\frac{q^{2m-1}}{1-q^{2m-1}}.\label{1-1}
\end{align}
Next,
\begin{align}
\sum^{\infty}_{m=n+1}\frac{q^{2m-1}}{1-q^{2m-1}}
&=\sum^{\infty}_{k=1}\frac{q^{2n+2k-1}}{1-q^{2n+2k-1}}\nonumber\\
&=\sum^{\infty}_{k=1}q^{2n+2k-1}\sum^{\infty}_{m=0}q^{2mn+2km-m}\nonumber\\
&=\sum^{\infty}_{m=0}q^{2mn+2n-m-1}\sum^{\infty}_{k=1}q^{(2m+2)k}\nonumber\\
&=\sum^{\infty}_{m=0}\frac{q^{2mn+2n+m+1}}{1-q^{2m+2}}\nonumber\\
&=\sum^{\infty}_{m=1}\frac{q^{2mn+m}}{1-q^{2m}}. \label{1-2}
\end{align}
Substituting \eqref{1-2} into \eqref{1-1} yields
\begin{align}
A(q)=\sum^{\infty}_{m=1}\frac{q^{2m-1}}{1-q^{2m-1}}\sum^{\infty}_{n=1}\frac{q^{2n}}{1-q^{2n}}
-\sum^{\infty}_{m,n=1}\frac{q^{2mn+2n+m}}{(1-q^{2n})(1-q^{2m})}.\label{1-3}
\end{align}
We now turn to compute the second term in the above identity.
\begin{align}
&\sum^{\infty}_{m,n=1}\frac{q^{2mn+2n+m}}{(1-q^{2n})(1-q^{2m})}\nonumber\\
&=\sum^{\infty}_{m=1}\frac{q^{m}}{1-q^{2m}}\sum^{\infty}_{n=1}\frac{q^{2mn+2n}}{1-q^{2n}}\nonumber\\
&=\sum^{\infty}_{m=1}\frac{q^{m}}{1-q^{2m}}\sum^{\infty}_{n=1}q^{2mn+2n}\sum^{\infty}_{k=0}q^{2nk}\nonumber\\
&=\sum^{\infty}_{m=1}\frac{q^{m}}{1-q^{2m}}\sum^{\infty}_{k=0}\sum^{\infty}_{n=1}q^{(2m+2k+2)n}\nonumber\\
&=\sum^{\infty}_{m=1}\frac{q^{m}}{1-q^{2m}}\sum^{\infty}_{k=0}\frac{q^{2m+2k+2}}{1-q^{2m+2k+2}}\nonumber\\
&=\sum^{\infty}_{m=1}\frac{q^{m}}{1-q^{2m}}\sum^{\infty}_{n=m+1}\frac{q^{2n}}{1-q^{2n}}\nonumber\\
&=\sum^{\infty}_{n=1}\frac{q^{2n}}{1-q^{2n}}\sum^{n-1}_{m=1}\frac{q^{m}}{1-q^{2m}}\nonumber\\
&=\sum^{\infty}_{n=1}\frac{q^{2n}}{1-q^{2n}}\sum^{\infty}_{m=1}\frac{q^{m}}{1-q^{2m}}
-\sum^{\infty}_{n=1}\frac{q^{2n}}{1-q^{2n}}\sum^{\infty}_{m=n}\frac{q^{m}}{1-q^{2m}}.\label{1-4}
\end{align}

Then using steps similar to the proof of \eqref{1-2}, we find that
\begin{align}
&\quad\sum^{\infty}_{n=1}\frac{q^{2n}}{1-q^{2n}}\sum^{\infty}_{m=n}\frac{q^{m}}{1-q^{2m}}\nonumber\\
&=\sum^{\infty}_{m,n=1}\frac{q^{2mn+n}}{(1-q^{2n})(1-q^{2m-1})}\nonumber\\
&=\sum^{\infty}_{m=1}\frac{1}{1-q^{2m-1}}\sum^{\infty}_{n=1}\frac{q^{2mn+n}}{1-q^{2n}}\nonumber\\
&=\sum^{\infty}_{m=1}\frac{1}{1-q^{2m-1}}\sum^{\infty}_{n=m+1}\frac{q^{2n-1}}{1-q^{2n-1}}.\label{1-5}
\end{align}
Here the last step follows from \eqref{1-2}.
In terms of \eqref{1-4} and \eqref{1-5}, we find that
\begin{align}
\sum^{\infty}_{m,n=1}\frac{q^{2mn+2n+m}}{(1-q^{2n})(1-q^{2m})}
&=\sum^{\infty}_{n=1}\frac{q^{2n}}{1-q^{2n}}\sum^{\infty}_{m=1}\frac{q^{m}}{1-q^{2m}}\nonumber\\
&\quad-\sum^{\infty}_{m=1}\frac{1}{1-q^{2m-1}}\sum^{\infty}_{n=m+1}\frac{q^{2n-1}}{1-q^{2n-1}}.\label{1.7}
\end{align}
Consequently, from \eqref{1-3} and \eqref{1.7}, we conclude that
\begin{align*}
A(q)&=\sum^{\infty}_{m=1}\frac{q^{2m-1}}{1-q^{2m-1}}\sum^{\infty}_{n=1}\frac{q^{2n}}{1-q^{2n}}
-\sum^{\infty}_{n=1}\frac{q^{2n}}{1-q^{2n}}\sum^{\infty}_{m=1}\frac{q^m}{1-q^{2m}}\nonumber\\
&\quad+\sum^{\infty}_{m=1}\frac{1}{1-q^{2m-1}}\sum^{\infty}_{n=m+1}\frac{q^{2n-1}}{1-q^{2n-1}}.
\end{align*}
Thus, to prove the lemma, it suffices to show that
\begin{align*}
\sum^{\infty}_{m=1}\frac{q^{2m-1}}{1-q^{2m-1}}
=\sum^{\infty}_{m=1}\frac{q^m}{1-q^{2m}}.
\end{align*}
Notice that
\begin{align*}
\sum^{\infty}_{m=1}\frac{q^{2m-1}}{1-q^{2m-1}}&=
\sum^{\infty}_{m=1}\sum^{\infty}_{\ell=1}q^{(2m-1)\ell}
=\sum^{\infty}_{m=1}\sum^{\infty}_{\ell=1}q^{m(2\ell-1)}\nonumber\\
&=\sum^{\infty}_{m=1}\sum^{\infty}_{\ell=0}q^{m(2\ell+1)}
=\sum^{\infty}_{m=1}\sum^{\infty}_{\ell=0}q^{2m\ell+m}=\sum^{\infty}_{m=1}\frac{q^m}{1-q^{2m}}.
\end{align*}

Therefore, we complete the proof.
\end{proof}

\begin{lemma}\label{lem-2} For positive integer $r$ and $s$, we have
\begin{align}
\sum^{\infty}_{n=1}\frac{q^{(2n-1)s}}{1-q^{(2n-1)r}}
\sum^{\infty}_{m=n+1}\frac{q^{(2m-1)r}}{1-q^{(2m-1)r}}
=\sum^{\infty}_{m=1}\frac{q^{2rm}}{1-q^{2rm}}
\sum^{\infty}_{n=m}\frac{q^{rn+s}}{1-q^{2rn+2s}}.
\end{align}
\end{lemma}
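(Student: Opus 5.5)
The plan is a direct series manipulation in the spirit of the proof of Lemma \ref{lem-1}: rewrite the inner sum on the left side with the identity \eqref{1-2}, then expand the remaining denominator geometrically, sum over $n$, and re-index. All series converge absolutely for $|q|<1$, and every exponent appearing is a positive integer, so interchanging the order of summation is legitimate.

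\textbf{Step 1: the inner sum.} Replace $q$ by $q^r$ in \eqref{1-2}. This gives
\begin{align*}
\sum_{m=n+1}^{\infty}\frac{q^{(2m-1)r}}{1-q^{(2m-1)r}}=\sum_{j=1}^{\infty}\frac{q^{(2jn+j)r}}{1-q^{2jr}}.
\end{align*}
Hence the left side equals
\begin{align*}
\sum_{j\ge1}\frac{q^{jr}}{1-q^{2jr}}\sum_{n\ge1}\frac{q^{(2n-1)s+2jnr}}{1-q^{(2n-1)r}}.
\end{align*}

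\textbf{Step 2: the sum over $n$.} Expand $1/(1-q^{(2n-1)r})=\sum_{k\ge0}q^{(2n-1)rk}$. The general term becomes $q^{-(s+rk)}\,q^{2n(s+rk+jr)}$. Summing the geometric series over $n\ge1$ gives
\begin{align*}
\sum_{n\ge1}\frac{q^{(2n-1)s+2jnr}}{1-q^{(2n-1)r}}=\sum_{k\ge0}\frac{q^{s+rk+2jr}}{1-q^{2s+2r(k+j)}}.
\end{align*}

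\textbf{Step 3: re-index.} Multiply by $q^{jr}$ and set $N=k+j$, so that $N\ge j$. The numerator becomes
\begin{align*}
q^{jr}\,q^{s+rk+2jr}=q^{2jr}\,q^{rN+s}.
\end{align*}
The left side therefore equals
\begin{align*}
\sum_{j\ge1}\frac{q^{2jr}}{1-q^{2jr}}\sum_{N\ge j}\frac{q^{rN+s}}{1-q^{2rN+2s}}.
\end{align*}
Renaming $j\to m$ and $N\to n$, this is exactly the right side of the lemma.

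\textbf{Main obstacle.} The only delicate point is Step 2: tracking the exponents correctly so that the combination $q^{jr}\cdot q^{2jr}$ splits as the factor $q^{2jr}$, which pairs with $1/(1-q^{2jr})$, times the shift that turns $q^{rk}$ into $q^{rN}$. I would double-check this bookkeeping by comparing the lowest-order terms of both sides, for example at $r=s=1$.
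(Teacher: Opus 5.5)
Your proposal is correct and follows the paper's proof essentially step for step. The paper likewise rewrites the inner sum as $\sum_{m\ge1} q^{2rmn+rm}/(1-q^{2rm})$; the only difference is that it re-derives this by direct expansion, whereas you quote \eqref{1-2} with $q\to q^r$. It then swaps the order of summation, expands $1/(1-q^{(2n-1)r})$ geometrically, sums over $n$, and shifts the index to $n=m+k$, and your exponent bookkeeping in Steps 2--3 checks out.
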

\begin{proof} Let
\begin{align*}
B(r,s,q):=\sum^{\infty}_{n=1}\frac{q^{(2n-1)s}}{1-q^{(2n-1)r}}
\sum^{\infty}_{m=n+1}\frac{q^{(2m-1)r}}{1-q^{(2m-1)r}}.
\end{align*}
Then we find that
\begin{align}
B(r,s,q)
&=\sum^{\infty}_{n=1}\frac{q^{(2n-1)s}}{1-q^{(2n-1)r}}
\sum^{\infty}_{k=1}\frac{q^{(2n+2k-1)r}}{1-q^{(2n+2k-1)r}}\nonumber\\
&=\sum^{\infty}_{n=1}\frac{q^{(2n-1)s}}{1-q^{(2n-1)r}}
\sum^{\infty}_{k=1}q^{(2n+2k-1)r}\sum^{\infty}_{m=0}q^{(2mn+2km-m)r}\nonumber\\
&=\sum^{\infty}_{n=1}\frac{q^{(2n-1)s}}{1-q^{(2n-1)r}}
\sum^{\infty}_{m=0}q^{(2mn+2n-m-1)r}\sum^{\infty}_{k=1}q^{(2rm+2r)k}\nonumber\\
&=\sum^{\infty}_{n=1}\frac{q^{(2n-1)s+2rn+r}}{1-q^{(2n-1)r}}
\sum^{\infty}_{m=0}\frac{q^{2rmn+rm}}{1-q^{2rm+2r}}\nonumber\\
&=\sum^{\infty}_{n=1}\frac{q^{(2n-1)s}}{1-q^{(2n-1)r}}
\sum^{\infty}_{m=1}\frac{q^{2rmn+rm}}{1-q^{2rm}}.\label{2-1}
\end{align}
We now simplify \eqref{2-1}. Note that
\begin{align}
&\sum^{\infty}_{n=1}\frac{q^{(2n-1)s}}{1-q^{(2n-1)r}}
\sum^{\infty}_{m=1}\frac{q^{2rmn+rm}}{1-q^{2rm}}\nonumber\\
&=\sum^{\infty}_{m=1}\frac{q^{rm-s}}{1-q^{2rm}}
\sum^{\infty}_{n=1}\frac{q^{2ns+2rmn}}{1-q^{(2n-1)r}}\nonumber\\
&=\sum^{\infty}_{m=1}\frac{q^{rm-s}}{1-q^{2rm}}
\sum^{\infty}_{n=1}q^{2ns+2rmn}\sum^{\infty}_{k=0}q^{(2n-1)rk}\nonumber\\
&=\sum^{\infty}_{m=1}\frac{q^{rm-s}}{1-q^{2rm}}
\sum^{\infty}_{k=0}q^{-rk}\sum^{\infty}_{n=1}q^{(2rm+2rk+2s)n}\nonumber\\
&=\sum^{\infty}_{m=1}\frac{q^{rm-s}}{1-q^{2rm}}
\sum^{\infty}_{k=0}\frac{q^{2rm+rk+2s}}{1-q^{2rm+2rk+2s}}\nonumber\\
&=\sum^{\infty}_{m=1}\frac{q^{2rm}}{1-q^{2rm}}
\sum^{\infty}_{n=m}\frac{q^{rn+s}}{1-q^{2rn+2s}}. \label{2-2}
\end{align}
Combining \eqref{2-1} and \eqref{2-2} yields the desirable result.
This completes the proof.
\end{proof}

We are now ready to present the first proof of Conjecture \ref{aab conjecture 1}.

\noindent{\bf Proof of Theorem \ref{thm:main}.} Notice that
\begin{align}
&\sum^{\infty}_{m,n=1}\frac{q^{mn2^a}}{(1+q^{n2^{a-1}})(1-q^{2m-1})}\nonumber\\
&=\sum^{\infty}_{m,n=1}\frac{q^{mn2^a}(1-q^{n2^{a-1}})}{(1-q^{n 2^a})(1-q^{2m-1})}\nonumber\\
&=\sum^{\infty}_{m,n=1}\frac{q^{mn2^a}}{(1-q^{n 2^a})(1-q^{2m-1})}
-\sum^{\infty}_{m,n=1}\frac{q^{mn2^a+n2^{a-1}}}{(1-q^{n 2^a})(1-q^{2m-1})}\nonumber\\
&=\sum^{\infty}_{m,n=1}\frac{q^{mn2^a}(1-q^{2m-1})}{(1-q^{n 2^a})(1-q^{2m-1})}
-\sum^{\infty}_{m,n=1}\frac{q^{mn2^a+n2^{a-1}}-q^{mn2^a+2m-1}}{(1-q^{n 2^a})(1-q^{2m-1})}\nonumber\\
&=\sum^{\infty}_{m,n=1}\frac{q^{mn2^a}}{1-q^{n 2^a}}
-\sum^{\infty}_{m,n=1}\frac{q^{mn2^a+n2^{a-1}}-q^{mn2^a+2m-1}}{(1-q^{n 2^a})(1-q^{2m-1})}\nonumber\\
&=\sum^{\infty}_{n=1}\frac{q^{n 2^a}}{(1-q^{n 2^a})^2}
-\sum^{\infty}_{m,n=1}\frac{q^{mn2^a+n2^{a-1}}-q^{mn2^a+2m-1}}{(1-q^{n 2^a})(1-q^{2m-1})}\nonumber\\
&=\sum^{\infty}_{n=1}\sigma_1(n)q^{n 2^a}
-\sum^{\infty}_{m,n=1}\frac{q^{mn2^a+n2^{a-1}}-q^{mn2^a+2m-1}}{(1-q^{n 2^a})(1-q^{2m-1})}.
\end{align}
To prove the theorem, it suffices to prove
\begin{align}
[q^{n 2^{a}}]\sum^{\infty}_{m,n=1}\frac{q^{mn2^a+2m-1}-q^{mn2^a+n2^{a-1}}}{(1-q^{n 2^a})(1-q^{2m-1})}=0.
\end{align}
For convenience, we define
$$F(a):=\sum^{\infty}_{m,n=1}\frac{q^{mn2^a+2m-1}-q^{mn2^a+n2^{a-1}}}{(1-q^{n 2^a})(1-q^{2m-1})}.$$
Then by the similar argument of \eqref{1-5}, we can establish that
\begin{align}
F(a)=\sum^{\infty}_{m=1}\frac{q^{2m-1}}{1-q^{2m-1}}\sum^{\infty}_{n=m}\frac{q^{n 2^a} }{1-q^{n 2^a}}
-\sum^{\infty}_{m=1}\frac{1}{1-q^{2m-1}}\sum^{\infty}_{n=m+1}\frac{q^{n 2^a-2^{a-1}} }{1-q^{n 2^a-2^{a-1}}}. \label{3-1}
\end{align}
According to the values of $a$, we break our proof into two cases.

(1) Case $a=1$. Substituting $a=1$ into \eqref{3-1} and then combining the result with
Lemma \ref{lem-1} yields
\begin{align}
F(1)=\sum^{\infty}_{m,n=1}\frac{q^{2mn+2m-1}-q^{2 mn+n}}{(1-q^{2n})(1-q^{2m-1})}=0.
\end{align}
This completes the case of $a=1$.

(2) Case $a\geq2$.
Observe that
\begin{align}
F(a)&=\sum^{2^{a-1}}_{s=1}\sum^{\infty}_{m=1}\frac{q^{(2m-1)s}}{1-q^{2^{a} m-2^{a-1}}}\sum^{\infty}_{n=m}\frac{q^{n 2^a} }{1-q^{n 2^a}}\nonumber\\
&\qquad-\sum^{2^{a-1}-1}_{s=0}\sum^{\infty}_{m=1}\frac{q^{(2m-1)s}}{1-q^{m2^a-2^{a-1}}}\sum^{\infty}_{n=m+1}\frac{q^{n 2^a-2^{a-1}} }{1-q^{n 2^a-2^{a-1}}}\nonumber\\
&=\sum^{2^{a-1}-1}_{s=1}\sum^{\infty}_{m=1}\frac{q^{(2m-1)s}}{1-q^{ m 2^{a}-2^{a-1}}}\sum^{\infty}_{n=m}\frac{q^{n 2^a} }{1-q^{n 2^a}}\nonumber\\
&
-\sum^{2^{a-1}-1}_{s=1}\sum^{\infty}_{m=1}\frac{q^{(2m-1)s}}{1-q^{ m 2^{a}-2^{a-1}}}\sum^{\infty}_{n=m+1}\frac{q^{n 2^a-2^{a-1}} }{1-q^{n 2^a-2^{a-1}}}\nonumber\\
&+\sum^{\infty}_{m=1}\frac{q^{(2m-1)\cdot2^{a-1}}}{1-q^{ m 2^{a}-2^{a-1}}}\sum^{\infty}_{n=m}\frac{q^{n 2^a} }{1-q^{n 2^a}}
-\sum^{\infty}_{m=1}\frac{1}{1-q^{2^{a} m-2^{a-1}}}\sum^{\infty}_{n=m+1}\frac{q^{n 2^a-2^{a-1}} }{1-q^{n 2^a-2^{a-1}}}\nonumber\\
&=\sum^{2^{a-1}-1}_{s=1}\left(\sum^{\infty}_{m=1}\frac{q^{(2m-1)s}}{1-q^{ m 2^{a}-2^{a-1}}}\sum^{\infty}_{n=m}\frac{q^{ n 2^{a}} }{1-q^{n 2^a}}
-\sum^{\infty}_{m=1}\frac{q^{(2m-1)s}}{1-q^{ m 2^{a}-2^{a-1}}}\right.\nonumber\\
&\quad\left.\times\sum^{\infty}_{n=m+1}\frac{q^{ n 2^{a}-2^{a-1}} }{1-q^{ n 2^{a}-2^{a-1}}}\right)\nonumber\\
&=\sum^{2^{a-1}-1}_{s=1}\left(\sum^{\infty}_{m=1}\frac{q^{(2m-1)s}}{1-q^{ m 2^{a}-2^{a-1}}}\sum^{\infty}_{n=m}\frac{q^{n 2^a} }{1-q^{n 2^a}}
-\sum^{\infty}_{m=1}\frac{q^{m 2^a}}{1-q^{m 2^a}}
\sum^{\infty}_{n=m}\frac{q^{ n 2^{a-1}+s}}{1-q^{n 2^a+2s}}\right). \label{3-2}
\end{align}
Here we obtain the penultimate step by employing Lemma \ref{lem-1} with $q$ replaced by $q^{2^{a-1}}$ and the last step follows from Lemma \ref{lem-2} with $r=2^{a-1}$.

Notice that by direct calculation
\begin{align}
F(a)&=\sum^{2^{a-1}-1}_{s=1}q^{-s}\sum^{\infty}_{m=1}\sum^{\infty}_{k=0}q^{2ms+mk2^{a}-k 2^{a-1}}\sum^{\infty}_{n=m}\sum^{\infty}_{i=1}q^{ in 2^{a}}\nonumber\\
&\quad-\sum^{2^{a-1}-1}_{s=1}q^{s}\sum^{\infty}_{m=1}\sum^{\infty}_{k=1}q^{mk2^{a}}\sum^{\infty}_{n=m}q^{ n 2^{a-1}}\sum^{\infty}_{i=0}q^{ in 2^{a}+2is}\nonumber\\
&=\sum^{2^{a-1}-1}_{s=1}\sum^{\infty}_{m=1}\sum^{\infty}_{k=0}
\sum^{\infty}_{n=m}\sum^{\infty}_{i=1}q^{(2m-1)s+mk2^{a}- k 2^{a-1}+ in 2^{a}}\nonumber\\
&\quad-\sum^{2^{a-1}-1}_{s=1}\sum^{\infty}_{m=1}\sum^{\infty}_{k=1}\sum^{\infty}_{n=m}\sum^{\infty}_{i=0}
q^{(2i+1)s+mk2^{a}+ n 2^{a-1}+ in 2^{a}}.
\end{align}
Observe that
\begin{align*}
(2m-1)s+ mk 2^{a}- k 2^{a-1}+ in 2^{a}\equiv (2m-1)s \pmod{2^{a-1}}.
\end{align*}
Since $2m-1$ is odd and $1\leq s\leq 2^{a-1}-1$, it follows that
 \begin{align*}
(2m-1)s+ mk 2^{a}- k 2^{a-1}+ in 2^{a} \not\equiv 0 \pmod{2^{a-1}}.
\end{align*}
Then
\begin{align*}
(2m-1)s+ mk 2^{a}- k 2^{a-1}+ in 2^{a} \not\equiv 0 \pmod{2^{a}}.
\end{align*}
Similarly,
\begin{align*}
(2i+1)s+ mk 2^{a}+ n 2^{a-1}+ in 2^{a} \not\equiv 0 \pmod{2^{a}}.
\end{align*}
We thus obtain that for $a\geq2$,
$$[q^{n 2^a}]F(a)=0.$$
This completes the proof.

\section{Second Proof of Amdeberhan-Andrews-Ballantine Conjecture}\label{proof}

We first prove Theorem \ref{lem:base}.

\begin{proof}[Proof of Theorem \textup{\ref{lem:base}}]
Firstly we define two functions,
\[
D_j:=\frac{q^j}{1-q^{2j}},
\qquad
E_j:=\frac{q^{2j}}{1-q^{2j}}.
\]
Then,
\begin{equation}\label{eq:odd-even}
\frac{q^j}{1+q^j}=D_j-E_j.
\end{equation}
 Expanding the term $1/(1-q^{2m-1})$ on the left-hand side of \eqref{eq:base} and then interchanging the order of the summation, we get
\begin{align*}
\sum_{m,k\geq 1}
\frac{q^{2mk}}{(1+q^k)(1-q^{2m-1})}
&=
\sum_{k\geq 1}\frac{1}{1+q^k}
\sum_{r\geq 0}\sum_{m\geq 1}q^{2mk+r(2m-1)}  \\
&=
\sum_{k\geq 1}\frac{1}{1+q^k}
\sum_{r\geq 0}\frac{q^{2k+r}}{1-q^{2k+2r}}.
\end{align*}
Making the change of variable $h=k+r$ in the last inner sum of the above equation, we are led to
\begin{align}
\sum_{m,k\geq 1}
\frac{q^{2mk}}{(1+q^k)(1-q^{2m-1})}
&=\sum_{h\geq1}\sum_{1\leq k\leq h}
\frac{q^h}{1-q^{2h}}\frac{q^k}{1+q^k} \notag \\
&=\sum_{h\geq1}\sum_{1\leq k\leq h}D_h(D_k-E_k),\label{eq:F-start}
\end{align}
where \eqref{eq:odd-even} is invoked in the last step.

Now our main aim is to simplify the double sum on the right-hand side of \eqref{eq:F-start}. To that end, we split the first sum into two parts, separating the terms with $k=h$ from the remaining terms, to obtain
\begin{align}
\sum_{m,k\geq 1}\frac{q^{2mk}}{(1+q^k)(1-q^{2m-1})}
&=\sum_{h\geq 1}D_h^2+\sum_{h\geq1}\left(\sum_{1\leq k<h}D_kD_h-\sum_{1\leq k\leq h}E_kD_h
\right).\label{eq:F-split}
\end{align}
Observe that it suffices to show that
\begin{align}\label{zero}
	\sum_{h\geq1}\left(\sum_{1\leq k<h}D_kD_h-\sum_{1\leq k\leq h}E_kD_h
	\right)=0,
\end{align}
for then the required result follows.

To achieve the above aim,  consider the following identity, for $r,s\geq 1$,
\begin{equation}\label{eq:telescoping-block}
D_rD_{r+s}=E_rD_s-D_sE_{r+s}.
\end{equation}
This can be seen easily by substituting the values of $D_j$ and $E_j$, that is
\[
\begin{aligned}
	D_rD_{r+s}
	&=\frac{q^r}{1-q^{2r}}\cdot \frac{q^{r+s}}{1-q^{2r+2s}}
	=\frac{q^{2r+s}}{(1-q^{2r})(1-q^{2r+2s})} \\
	&= \frac{q^{2r+s}}{(1-q^{2r})(1-q^{2r+2s})}
	=E_rD_s-D_sE_{r+s}.
\end{aligned}
\]
Summing \eqref{eq:telescoping-block} over $r,s\geq 1$ leads us to
\begin{align}
\sum_{r,s\ge1} D_rD_{r+s}&= \sum_{r,s\ge1}\bigl(E_rD_s-D_sE_{r+s}\bigr)\nonumber\\
&=\sum_{s\geq1}D_s\left(\sum_{r\geq1}\left(E_r-E_{r+s}\right)\right)\nonumber\\
&=\sum_{s\geq1}D_s\sum_{r=1}^sE_r.
\end{align}
Making the change of variables $r=k$ and $h=r+s$ on the left-hand side, we arrive at
\begin{align*}
\sum_{h\ge 1}\sum_{k=1}^{h-1} D_kD_h=\sum_{s\geq1}\sum_{1\leq r\leq s}D_sE_r.
\end{align*}
This proves our aim in \eqref{zero}.

Therefore equations \eqref{eq:F-split} and \eqref{zero} together imply that
\[
\sum_{m,k\geq 1}\frac{q^{2mk}}{(1+q^k)(1-q^{2m-1})} =\sum_{h\geq 1}D_h^2=\sum_{h\geq 1}\frac{q^{2h}}{(1-q^{2h})^2}.
\]
This completes the proof of \eqref{eq:base}.
\end{proof}

We now provide the second proof of Conjecture \ref{aab conjecture 1}.
\begin{proof}[Proof of Theorem \textup{\ref{thm:main}}]
Let
\[
\delta:=2^{a-1},
\qquad
x:=q^\delta.
\]
The series in \eqref{eq:main} then becomes
\begin{align}\label{series}
\sum_{m,k\geq 1}
\frac{x^{2mk}}{(1+x^k)(1-q^{2m-1})}.
\end{align}
Expanding
\[
\frac{1}{1-q^{2m-1}}=\sum_{\ell\geq 0}q^{\ell(2m-1)},
\]
and substituting in \eqref{series}, we have
\begin{align*}
\sum_{m,k\geq 1}
\frac{x^{2mk}}{(1+x^k)(1-q^{2m-1})}=	\sum_{m,k\geq 1}\frac{x^{2mk}}{(1+x^k)}\sum_{\ell\geq 0}q^{\ell(2m-1)}.
\end{align*}
We observe that all factors  except $q^{\ell(2m-1)}$ are powers of $x=q^\delta$.  What we want is the coefficient of $q^{2N\delta}$.

To extract the coefficient of
\begin{align*}
	[q^{N2^a}]=[q^{2N\delta}],
\end{align*}
we consider
\begin{align*}
	q^{mk2^a+\ell(2m-1)}=q^{2mk\delta +\ell(2m-1)}.
\end{align*}
That means, we must have
\begin{align*}
	 &2mk\delta +\ell(2m-1)=2N\delta
	 \implies&\ell(2m-1)=\delta(2N-2mk).
\end{align*}
Since $2m-1$ is odd and $\delta$ is a power of $2$, the congruence
\[
\ell(2m-1)\equiv 0\pmod \delta
\]
holds if and only if $\ell\equiv 0\pmod \delta$.  Thus only the terms with $\ell=\delta t$ contribute to the coefficient of $q^{2N\delta}$ in the sum.  For such terms,
\[
q^{\ell(2m-1)}=q^{\delta t(2m-1)}=x^{t(2m-1)}.
\]
Consequently, for the purpose of extracting the coefficient $[x^{2N}]$, the series reduces to
\begin{align*}
	\sum_{m,k\geq 1}
	\frac{x^{2mk}}{(1+x^k)(1-x^{2m-1})}.
\end{align*}
Therefore, we finally have
\begin{align}\label{eq:reduction-to-x}
&\coeff{q^{N2^a}}
\sum_{m,k\geq 1}
\frac{q^{mk2^a}}{(1+q^{k2^{a-1}})(1-q^{2m-1})}
\notag\\
&\hspace{2cm}=
\coeff{x^{2N}}
\sum_{m,k\geq 1}
\frac{x^{2mk}}{(1+x^k)(1-x^{2m-1})}.
\end{align}
By Theorem \ref{lem:base}, we already have
\begin{align*}
\coeff{x^{2N}}\sum_{h\geq 1}\frac{x^{2h}}{(1-x^{2h})^2}=\sigma_1(N).
\end{align*}
This proves \eqref{eq:main}.
\end{proof}

\section{Concluding Remarks}\label{concluding}

Our proof of Conjecture \ref{aab conjecture 1} leads us to a new representation of the quasi-modular form $E_2(q)$, given in Corollary \ref{new representation}.   This naturally raises the following question:

\emph{Is there a generalization of Theorem \ref{lem:base} for the generalized divisor function} $$\sigma_k(n)=\sum_{0<d|n}d^k\ ?$$

If so, it would lead to a new representation for the Eisenstein series $E_{k+1}(q),\ k\in\mathbb{N}$.

Another question is the following:

\emph{Does $\sigma_k(n)$ also arise as coefficients of a double Lambert series similar to the one for $\sigma_1(n)$ in Theorem \textup{\ref{thm:main}}?}

\medskip

\noindent
  {\bf{Acknowledgements:}}
We are grateful to Professor Andrews for reading an early version of the first proof in this paper.  We also thank H.H. Chan and S. Chern for their valuable comments and suggestions.
The first author was supported by the Taishan Scholar Project of Shandong Province (No. tsqn202507173) and the Natural Science Foundation of Shandong Province of China (No. ZR2025MS91). The second author was partially supported by the Grant ANRF/ECRG/2024/003222/PMS of the Anusandhan National Research Foundation (ANRF), Govt. of India, and CPDA and FIG grants of IIT Roorkee.  The second author was supported through the aforementioned ANRF grant as an ANRF project student. Authors thank these institutions for the support.

\end{document}